\documentclass[10pt,a4paper]{article}
\usepackage[margin=1.1cm]{geometry}
\usepackage{amsmath,amsthm,amsfonts,amssymb,amscd,cite,graphicx}

\usepackage{tikz}
\usepackage{url}

\usepackage{titlesec}
\titleformat{\section}
{\normalfont\fontsize{12}{15}\bfseries}{\thesection}{1em.}{}

\usepackage[labelfont=bf]{caption}
\counterwithin{figure}{section}
\counterwithin{table}{section}

\newtheorem{corollary}{Corollary}[section]
\newtheorem{lemma}{Lemma}[section]
\newtheorem{definition}{Definition}[section]
\newtheorem{theorem}{Theorem}[section]
\newtheorem{remark}{Remark}[section]

\allowdisplaybreaks[4]

\let\oldbibliography\thebibliography
\renewcommand{\thebibliography}[1]{%
	\oldbibliography{#1}%
	\setlength{\itemsep}{-2pt}%
}

\title{Self-avoiding polygons on a three-row square-lattice strip}

\author{Michael von Thaden\\[1mm]
	\small Fachhochschule Westküste\\
	\small Fritz-Thiedemann-Ring 20, 25746 Heide, Germany\\
	\small \texttt{vonthaden@fh-westkueste.de}}

\date{}

\begin{document}

	\maketitle
\begin{abstract}
We give a closed formula for the number $p^{S_2}(n)$ of self-avoiding polygons (SAPs) of length $n$ on the strip $S_2:=\mathbb{Z}\times\{0,1,2\}$, together with closed formulas for those subtypes of SAPs which are determined by the numbers of vertical steps in their leftmost and rightmost columns. For the subtype whose leftmost and rightmost columns each contain two vertical steps, we also derive an alternative representation as a binomial sum. Our derivation is elementary: it is purely combinatorial and geometric and avoids generating
functions. Comparing the two representations yields a new geometric proof of an
identity arising in Larsen's treatment \cite{L07} of a problem posed by Gessel \cite{G95}. Finally, we show that this subtype of SAPs is closely connected to the sequence A007909. More precisely, for $m\geq0$, the number of these SAPs whose leftmost and
rightmost columns each contain two vertical steps and whose length equals $2m+6$ is given by the term of this sequence with index $m$, which thereby acquires a geometric interpretation alongside the compositions it enumerates.

\end{abstract}

\section{Introduction}

A \emph{self-avoiding walk} (SAW) on the square lattice is a nearest-neighbour path that does not visit any vertex more than once. A \emph{self-avoiding polygon} (SAP) is a closed nearest-neighbour walk that does not visit any vertex more than once, except that its last
vertex coincides with its origin.  Self-avoiding walks and polygons sit at a crossroads of combinatorics, statistical physics and polymer science. They play a central role in statistical physics, in particular as models for long-chain polymers (see \cite{MS13} and \cite{GJ09}). At the same time, their definition is elementary. They are therefore easily accessible and also of pedagogical interest, since many of the basic questions can be understood without heavy technical prerequisites.

One of the naturally arising and central questions is: How many SAPs exist for a given perimeter $n$?  Interestingly, although self-avoiding walks  and polygons are very easy to define, closed enumerative formulas exist only in limiting or highly restricted cases. Up to now, closed formulas for their otherwise unrestricted enumeration only seem available for narrow strips of the square lattice. For the strip $S_1:=\mathbb{Z}\times\{0,1\}$, Zeilberger and Benjamin obtained a beautiful connection between the sequence $c^{S_1}_n (n>1)$ of self-avoiding walks of length $n$  and the Fibonacci sequence: $c^{S_1}_n=8F_n-\delta_n$, where $\delta_n=4$ for odd $n$ and $\delta_n=n$ for even $n$ (see \cite{Z96} and \cite{B06}). On the other hand, the situation for SAPs on the very same strip is as easy as it gets. We have $p^{S_1}(n) = 1$ for every even integer $n\geq 4$.

In this paper we enumerate self-avoiding polygons on the strip $S_2:=\mathbb{Z}\times\{0,1,2\}$, up to horizontal translation. We will do so by analyzing self-avoiding polygons of different left and right boundary configurations, where we look at the number of vertical steps we find at the very leftmost and rightmost positions of the SAP. Whereas Bousquet-M\'elou and Brak \cite{BMB09} have already treated self-avoiding polygons in this setting and provided the generating functions for the number of SAPs, the proofs for these formulas in this paper will be elementary, combinatorial and geometrical, no generating functions will be needed. Furthermore, the resulting formulas will directly relate the number of self-avoiding polygons to the sequence A007909. Comparing the two representations gives a geometric proof of the unweighted binomial identity evaluated by Larsen \cite[p.~38, Remark~4.3]{L07} in the course of solving Gessel's related weighted problem \cite{G95}.

\section{The Self-Avoiding Polygon on the Strip $S_2$}
Since most of the self-avoiding polygons considered in this paper are confined between two horizontal boundary lines, and since the main case of interest is the three-row strip of geometric width $2$, we introduce the following notation.

\begin{definition}
	For $L\in\mathbb N_0$, let
	$$
	S_L := \mathbb Z\times\{0,1,\ldots,L\}.
	$$
	We call $S_L$ the horizontal strip of width $L$ in the square lattice.
	In particular, $S_2=\mathbb Z\times\{0,1,2\}$.
\end{definition}

Throughout this paper, we regard a SAP as an unoriented simple cycle and two SAPs are counted as the same if their edge sets differ by a horizontal translation. Reflected polygons are counted separately.

\begin{remark}\label{slices}
We will use a few easy observations concerning self-avoiding polygons on $S_2$, which do not deserve separate lemmas but, because we use them so frequently, shall be stated and explained here. First, let us observe that every unit-width slice of $S_2$ between two consecutive columns within the horizontal span of an SAP contains exactly two of the polygon’s horizontal edges. This is simply because the alternatives of just one or three horizontal steps are incompatible with the SAP being closed. In particular, every SAP on $S_2$ resembles the bit of a key; see Figure~\ref{fig:key-bit-sap-46}.

\begin{figure}[t]
	\centering
	\begin{tikzpicture}[x=0.7cm,y=0.7cm,line cap=round]
		\tikzset{
			gridline/.style={line width=0.8pt, draw=black!70},
			walk/.style={line width=2.4pt, draw=red!85},
		}
		
		\draw[gridline] (-0.5,0) -- (17.5,0);
		\draw[gridline] (-0.5,1) -- (17.5,1);
		\draw[gridline] (-0.5,2) -- (17.5,2);
		
		\draw[walk]
		(0,0) -- (1,0) -- (2,0) -- (3,0) -- (3,1) -- (4,1)
		-- (5,1) -- (5,0) -- (6,0) -- (7,0) -- (8,0) -- (9,0)
		-- (10,0) -- (11,0) -- (11,1) -- (12,1) -- (13,1)
		-- (13,0) -- (14,0) -- (15,0) -- (16,0) -- (17,0)
		-- (17,1) -- (17,2) -- (16,2) -- (16,1) -- (15,1)
		-- (14,1) -- (14,2) -- (13,2) -- (12,2) -- (11,2)
		-- (10,2) -- (9,2) -- (9,1) -- (8,1) -- (7,1) -- (7,2)
		-- (6,2) -- (5,2) -- (4,2) -- (3,2) -- (2,2) -- (1,2)
		-- (0,2) -- (0,1) -- cycle;
		
	\end{tikzpicture}
	\caption{An SAP resembling a key bit.}
	\label{fig:key-bit-sap-46}
\end{figure}
 Furthermore, we observe that the length $n$ of every SAP is always an even number, because an SAP always returns to its origin, which means there must be the same number of vertical steps going ``up'' as steps going ``down''. The same holds true for the number of horizontal steps going to the ``right'' and to the ``left''.  
\end{remark}

For our approach to work, we  will have to split the number of all self-avoiding polygons into ones with different left and right boundary configurations. What we mean by this, will be clarified by the following definition

\begin{definition}
We shall say that a self-avoiding polygon starts/ends with a \emph{type-1 configuration}, if the leftmost/rightmost points on the polygon constitute a single vertical step. On the other hand do we say that a self-avoiding polygon starts/ends with a \emph{type-2 configuration}, if the leftmost/rightmost points on the polygon constitute two successive vertical steps. We shall call SAPs, which end with a type-1 configuration respectively a type-2 configuration, \emph{type-1 SAPs respectively type-2 SAPs}. 
	
Furthermore, $p^{S_2}_{xy}(n)$ with $x,y \in \{1,2\}$ shall be defined as the number of self-avoiding polygons on  the Strip  $S_2$ which start with a type-$x$ and end with a type-$y$ configuration. 
\end{definition}

In Figure \ref{fig:sap-type-configurations} two SAPs are shown, the one on the left starts and ends with a type-1 configuration, whereas the one on the right starts and ends with a type-2 configuration.

\begin{figure}[t]
	\centering
	\begin{tikzpicture}[x=0.55cm,y=0.7cm,line cap=round]
		\tikzset{
			gridline/.style={line width=0.8pt, draw=black!70},
			walk/.style={line width=2.4pt, draw=red!85},
		}
		
		\begin{scope}
			\draw[gridline] (-0.4,0) -- (10.4,0);
			\draw[gridline] (-0.4,1) -- (10.4,1);
			\draw[gridline] (-0.4,2) -- (10.4,2);
			
			\draw[walk]
			(0,0) -- (1,0) -- (2,0) -- (3,0) -- (3,1)
			-- (4,1) -- (5,1) -- (5,0) -- (6,0) -- (7,0)
			-- (8,0) -- (9,0) -- (10,0) -- (10,1) -- (9,1)
			-- (8,1) -- (7,1) -- (6,1) -- (6,2) -- (5,2)
			-- (4,2) -- (3,2) -- (2,2) -- (2,1) -- (1,1)
			-- (0,1) -- cycle;
			
			\node[font=\small] at (5,-0.55)
			{type-$1$ configuration};
		\end{scope}
		
		\begin{scope}[shift={(12,0)}]
			\draw[gridline] (-0.4,0) -- (9.4,0);
			\draw[gridline] (-0.4,1) -- (9.4,1);
			\draw[gridline] (-0.4,2) -- (9.4,2);
			
			\draw[walk]
			(0,0) -- (1,0) -- (2,0) -- (3,0) -- (3,1)
			-- (4,1) -- (5,1) -- (5,0) -- (6,0) -- (7,0)
			-- (8,0) -- (9,0) -- (9,1) -- (9,2) -- (8,2)
			-- (8,1) -- (7,1) -- (6,1) -- (6,2) -- (5,2)
			-- (4,2) -- (3,2) -- (2,2) -- (1,2) -- (0,2)
			-- (0,1) -- cycle;
			
			\node[font=\small] at (4.5,-0.55)
			{type-$2$ configuration};
		\end{scope}
		
	\end{tikzpicture}
	\caption{SAPs with type-$1$ and type-$2$ configurations,
		respectively, at both extreme $x$-coordinates.}
	\label{fig:sap-type-configurations}
\end{figure}

Now, we are ready to formulate the following lemma, which will help us to derive recursive formulas for  the different, mutually exclusive subtypes of SAPs on the strip $S_2$, which have  different boundary configurations. 

\begin{lemma}\label{lem:peeling}
	For every $x\in\{1,2\}$ and every even integer $n\geq 8$,
	\begin{align}
		p^{S_2}_{x1}(n)
		&=p^{S_2}_{x1}(n-2)+2p^{S_2}_{x2}(n-2),
		\label{eq:peel-1}\\
		p^{S_2}_{x2}(n)
		&=p^{S_2}_{x1}(n-4)+p^{S_2}_{x2}(n-2).
		\label{eq:peel-2}
	\end{align}
\end{lemma}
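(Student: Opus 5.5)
The plan is to prove both recursions bijectively, by \emph{peeling off} the rightmost unit slice of the polygon and analysing the polygon's behaviour in the column just to its left. Let $r$ be the largest $x$-coordinate of the SAP. By Remark~\ref{slices}, the slice $[r-1,r]$ contains exactly two horizontal edges. Their heights are fixed by the right configuration: rows $\{0,1\}$ or $\{1,2\}$ for type~1, and rows $\{0,2\}$ for type~2. I will enumerate the finitely many ways the two strands can continue at column $r-1$. For each case I will give a local surgery that turns the polygon into a shorter SAP with column $r-1$ as its rightmost column, and then check that the surgery can be inverted. The hypothesis $n\geq 8$ will be used to ensure that column $r-1$ is not the leftmost column, so the left configuration of type $x$ is untouched. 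A polygon of width one has length at most $6$, and I will check that the relevant width-$2$ cases behave consistently with this.

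For \eqref{eq:peel-1}, by up--down symmetry let the rightmost vertical step join $(r,0)$ and $(r,1)$. At $(r-1,0)$ the strand cannot go up, since $(r-1,1)$ lies on the other strand, so it continues left along row $0$. At $(r-1,1)$ the strand either continues left, or goes up to $(r-1,2)$ and then left; it cannot then turn right, because $(r,2)$ would be a dead end.

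In case (a), both strands go straight. Deleting the last slice and closing with the vertical edge joining $(r-1,0)$ and $(r-1,1)$ yields a type-$(x,1)$ SAP of length $n-2$ with the same rows. Conversely, any such polygon is extended by pushing its right vertical step one unit to the right.

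In case (b), the upper strand turns up. Deleting the two horizontal edges and the vertical edge at $x=r$ and adding the vertical edge joining $(r-1,0)$ and $(r-1,1)$ yields a type-$(x,2)$ SAP of length $n-2$. Conversely, from a type-$(x,2)$ polygon we remove the lower (or, in the mirrored situation, the upper) half of its right column and attach a unit square. Thus each type-$2$ polygon of length $n-2$ arises exactly twice in total: once from each of the two type-$1$ positions. This gives $p_{x1}(n-2)+2p_{x2}(n-2)$.

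For \eqref{eq:peel-2}, the slice $[r-1,r]$ has strands in rows $0$ and $2$. At column $r-1$ each strand either goes straight, or turns toward row $1$ and then goes left along row $1$. Both strands cannot turn, because they would meet at $(r-1,1)$ and close the polygon prematurely, which is excluded for width $>1$.

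If both go straight, deleting the last slice gives a type-$(x,2)$ SAP of length $n-2$, bijectively.

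If the bottom strand turns up to $(r-1,1)$, the previous slice uses rows $1$ and $2$. In that case I remove the two horizontal edges of the last slice, the two vertical edges at $x=r$, and the vertical edge joining $(r-1,0)$ and $(r-1,1)$, and I add the vertical edge joining $(r-1,1)$ and $(r-1,2)$. This net change of $-4$ yields a type-$(x,1)$ SAP whose right step lies in rows $1$ and $2$. The symmetric case, where the top strand turns down, yields right step in rows $0$ and $1$. The inverse reads off which rows the type-$1$ step occupies, so every type-$(x,1)$ polygon of length $n-4$ is hit exactly once. This gives $p_{x1}(n-4)+p_{x2}(n-2)$.

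The main obstacle is the bookkeeping at the boundary: checking that the case analysis at column $r-1$ is exhaustive, that the surgeries preserve self-avoidance, and that the peeled polygon still has its left configuration at a column different from $r-1$. This last point is where $n\geq 8$ enters; small polygons of width $1$ or $2$ must be checked by hand against the conventions for $p_{xy}$.
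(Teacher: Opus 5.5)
Your proposal is correct and follows essentially the same peeling argument as the paper. You delete the rightmost slice and split according to whether a strand turns at column $r-1$, which is equivalent to the paper's split by the rows of the preceding slice, and you obtain the same local surgeries and the same two-to-one and one-to-one correspondences; you are only more explicit about exhaustiveness of the cases and about why $n\geq 8$ forces span at least $2$.
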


\begin{proof}
First, let us remark that a SAP of length $n>6$ necessarily has horizontal span at least $2$, which means that the SAP consists of more than one unit-width slice---a fact that we will implicitly use in our proof. All operations considered below affect only the right end and therefore leave the type-$x$ configuration at the left end unchanged.
	
Let us start with a self-avoiding polygon which ends with a type-1 configuration and has length $n\geq 8$.
	
Now, if we ``peel off'' the rightmost unit-width slice of this polygon,	we must distinguish two cases. Either the preceding unit-width slice to the left contains two horizontal steps lying on the two extreme rows $0$ and $2$ (Case 1), or they lie on the same two rows as in the rightmost slice (Case 2).
	
In the first case, the resulting open polygonal path can simply be closed and transformed into a standard SAP of length $n-2$ by deleting three steps and inserting one vertical step; see Figure~\ref{fig:right-configuration-deletions}(a) for an illustration of this procedure. It is important to point out that exactly two type-1 SAPs of length $n$ are mapped onto each distinct type-2 SAP of length $n-2$: the first one has its two rightmost horizontal steps on rows $0$ and $1$, and the second one has the corresponding steps on rows $1$ and $2$.
	
In the second case, almost the same is true as in the previous case: the resulting open polygonal path can simply be closed and transformed into a standard SAP of length $n-2$ by deleting three steps and inserting one vertical step. In contrast to the previous case, however, this operation establishes a bijection between the type-1 SAPs of length $n$ belonging to this case and the type-1 SAPs of length $n-2$, since every resulting SAP admits a unique inverse extension. Overall, this results in the recursive relation
$$
p^{S_2}_{x1}(n) = p^{S_2}_{x1}(n-2)+2p^{S_2}_{x2}(n-2).
$$
	
The proof for SAPs which end with a type-2 configuration works in the same way as the one before. If the preceding unit-width slice contains two horizontal steps lying on the two extreme rows $0$ and $2$, peeling off the rightmost slice maps the type-2 SAP of length $n$ onto a type-2 of length $n-2$. Otherwise, it maps the type-2 SAP of length $n$ onto a type-1 SAP of length $n-4$; see Figure~\ref{fig:right-configuration-deletions}(b) for an illustration of this procedure. In both cases, the operation is bijective, since every resulting SAP admits a unique inverse extension. Thus, in this case, we have
$$
p^{S_2}_{x2}(n) = p^{S_2}_{x1}(n-4)+p^{S_2}_{x2}(n-2),
$$
which proves the lemma.
\end{proof}

\begin{figure}[t]
	\centering
	\begin{tikzpicture}[
		x=0.72cm,
		y=0.84cm,
		line cap=round,
		line join=round
		]
		\def\showmarks{1}
		
		\tikzset{
			gridline/.style={
				line width=0.75pt,
				draw=black
			},
			blueedge/.style={
				line width=2.2pt,
				draw=blue!75!black
			},
			rededge/.style={
				line width=2.2pt,
				draw=red!85!black
			},
			stepmark/.style={
				circle,
				draw=black!75,
				fill=white,
				line width=0.55pt,
				inner sep=0pt,
				minimum size=3pt
			},
		}
		
		\draw[gridline] (-1.45,0) -- (16.40,0);
		\draw[gridline] (-1.45,1) -- (16.40,1);
		\draw[gridline] (-1.45,2) -- (16.40,2);


		\begin{scope}
			
			\node[
			font=\small,
			fill=white,
			inner sep=1.5pt
			] at (-1.10,1) {$(a)$};
			
			\foreach \x in {-0.60,-0.24,0.12}
			\fill[black] (\x,1) circle (1.55pt);
			
			\draw[blueedge]
			(0.75,0) -- (1.75,0) -- (1.75,1);
			
			\draw[blueedge]
			(0.75,2) -- (1.75,2);
			
			\draw[rededge]
			(1.75,1) -- (2.75,1)
			-- (2.75,2) -- (1.75,2);
			
			\ifnum\showmarks=1
			\foreach \x/\y in {
				0.75/0,1.75/0,1.75/1,2.75/1,
				2.75/2,1.75/2,0.75/2
			}
			\node[stepmark] at (\x,\y) {};
			\fi
			
			\draw[->,line width=0.85pt]
			(3.10,1.35) -- (4.10,1.35);
			
			\foreach \x in {4.52,4.88,5.24}
			\fill[black] (\x,1) circle (1.55pt);
			
			\draw[blueedge]
			(5.85,0) -- (6.85,0)
			-- (6.85,1) -- (6.85,2)
			-- (5.85,2);
			
			\ifnum\showmarks=1
			\foreach \x/\y in {
				5.85/0,6.85/0,6.85/1,6.85/2,5.85/2
			}
			\node[stepmark] at (\x,\y) {};
			\fi
			
		\end{scope}


		\begin{scope}[shift={(9.30,0)}]
			
			\node[
			font=\small,
			fill=white,
			inner sep=1.5pt
			] at (-1.10,1) {$(b)$};
			
			\foreach \x in {-0.60,-0.24,0.12}
			\fill[black] (\x,1) circle (1.55pt);
			
			\draw[blueedge]
			(0.75,1) -- (1.75,1);
			
			\draw[blueedge]
			(0.75,2) -- (1.75,2);
			
			\draw[rededge]
			(1.75,1) -- (1.75,0)
			-- (2.75,0) -- (2.75,1)
			-- (2.75,2) -- (1.75,2);
			
			\ifnum\showmarks=1
			\foreach \x/\y in {
				0.75/1,1.75/1,1.75/0,2.75/0,
				2.75/1,2.75/2,1.75/2,0.75/2
			}
			\node[stepmark] at (\x,\y) {};
			\fi
			
			\draw[->,line width=0.85pt]
			(3.10,1.35) -- (4.10,1.35);
			
			\foreach \x in {4.52,4.88,5.24}
			\fill[black] (\x,1) circle (1.55pt);
			
			\draw[blueedge]
			(5.85,1) -- (6.85,1)
			-- (6.85,2) -- (5.85,2);
			
			\ifnum\showmarks=1
			\foreach \x/\y in {
				5.85/1,6.85/1,6.85/2,5.85/2
			}
			\node[stepmark] at (\x,\y) {};
			\fi
			
		\end{scope}
		
	\end{tikzpicture}
	
\caption{Deletion of the two possible right-hand configurations
		and closure of the resulting open polygonal paths.}
	\label{fig:right-configuration-deletions}
\end{figure}

It is important to point out that these recursions are equivalent to the transfer structure implicit in the generating function of Bousquet-Mélou and Brak \cite{BMB09}, from which the closed formula below could also be extracted by routine computations; our derivation is instead purely geometric and combinatorial.

Based upon the above formula, we will derive a closed recurrence for $p^{S_2}_{x1}(n)$ and $p^{S_2}_{x2}(n)$, which directly results in   the closed formulas for these two sequences.

\begin{theorem}\label{Rec}
For every $x\in\{1,2\}$ and every even $n\geq 10$ we have 
$$
p^{S_2}_{x1}(n) = 2p^{S_2}_{x1}(n-2) -p^{S_2}_{x1}(n-4)+2p^{S_2}_{x1}(n-6)
$$
and 
$$
p^{S_2}_{x2}(n) = 2p^{S_2}_{x2}(n-2) - p^{S_2}_{x2}(n-4)+2p^{S_2}_{x2}(n-6)
$$
\end{theorem}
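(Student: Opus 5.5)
We eliminate one of the two sequences from the coupled system of Lemma~\ref{lem:peeling}. Fix $x\in\{1,2\}$ and write $a(n):=p^{S_2}_{x1}(n)$ and $b(n):=p^{S_2}_{x2}(n)$. Lemma~\ref{lem:peeling} then reads
\[
a(n)=a(n-2)+2b(n-2),\qquad b(n)=a(n-4)+b(n-2)
\]
for even $n\ge 8$. In shift-operator form, with $E$ the shift by $2$, this is a $2\times2$ linear system. Its characteristic polynomial is obtained from
\[
\det\begin{pmatrix}1-E^{-1} & -2E^{-1}\\ -E^{-2} & 1-E^{-1}\end{pmatrix}=(1-E^{-1})^2-2E^{-3},
\]
and multiplying by $E^{3}$ gives $E^3-2E^2+E-2$. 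This matches the claimed recurrence $u(n)=2u(n-2)-u(n-4)+2u(n-6)$ for both $a$ and $b$. So the plan is to verify the elimination by hand, keeping careful track of where the index ranges are valid.

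For $a$, take the first relation at $n$ and at $n-2$ and subtract:
\[
a(n)-a(n-2)=a(n-2)-a(n-4)+2\bigl(b(n-2)-b(n-4)\bigr).
\]
The second relation at index $n-2$ gives $b(n-2)-b(n-4)=a(n-6)$. Substituting yields
\[
a(n)=2a(n-2)-a(n-4)+2a(n-6).
\]
This step uses \eqref{eq:peel-1} at $n$ and $n-2$, and \eqref{eq:peel-2} at $n-2$. All three require their index to be at least $8$, which holds for $n\ge 10$.

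For $b$, take the second relation at $n$ and at $n-2$ and subtract:
\[
b(n)-b(n-2)=b(n-2)-b(n-4)+\bigl(a(n-4)-a(n-6)\bigr).
\]
The first relation at index $n-4$ gives $a(n-4)-a(n-6)=2b(n-6)$. Substituting yields
\[
b(n)=2b(n-2)-b(n-4)+2b(n-6).
\]

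The only real obstacle is this index bookkeeping for $b$. Here \eqref{eq:peel-1} is needed at index $n-4$, which is at least $8$ only when $n\ge 12$. The borderline case $n=10$ would need $a(6)=a(4)+2b(4)$, which lies outside the stated range of Lemma~\ref{lem:peeling}. I would handle it by checking this instance directly from small cases, recomputing $p^{S_2}_{x1}(4)$, $p^{S_2}_{x2}(4)$ and $p^{S_2}_{x1}(6)$ by explicit enumeration. For instance, with $x=1$: $a(4)=2$ (the two unit squares), $b(4)=0$, and $a(6)$ counts $2\times1$ rectangles of span $2$, giving $a(6)=2$. Then $a(4)+2b(4)=2$, as required. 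A similar check for $x=2$, where $b(4)=1$ (the height-2 column polygon does not exist at length $4$), will also be done; if it fails, I would instead verify the $n=10$ instance of the $b$-recurrence numerically from directly enumerated values.
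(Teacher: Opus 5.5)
Your proposal is correct and matches the paper's proof: you use the same elimination via the two peeling relations, and you make the same observation that the $p^{S_2}_{x2}$-recurrence at $n=10$ falls outside the range of Lemma~\ref{lem:peeling}; the paper settles that case by plugging in $p^{S_2}_{x2}(4),\dots,p^{S_2}_{x2}(10)$, while you verify the missing instance of \eqref{eq:peel-1} at index $6$. One slip: for $x=2$ you have $b(4)=p^{S_2}_{22}(4)=0$, not $1$ (as your own parenthetical indicates), and with this value the check reads $p^{S_2}_{21}(6)=0=p^{S_2}_{21}(4)+2p^{S_2}_{22}(4)$, so it succeeds without your fallback.
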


\begin{proof}
Equation \eqref{eq:peel-1} from  Lemma \ref{lem:peeling} can be restated in the form $p^{S_2}_{x2}(n-2) = (p^{S_2}_{x1}(n)-p^{S_2}_{x1}(n-2))/2$. Substituting the last formula  and shifted versions into Equation \eqref{eq:peel-2} from Lemma \ref{lem:peeling} leads to $(p^{S_2}_{x1}(n+2)-p^{S_2}_{x1}(n))/2 = (p^{S_2}_{x1}(n)-p^{S_2}_{x1}(n-2))/2+p^{S_2}_{x1}(n-4)$. Solving for $p^{S_2}_{x1}(n+2)$ directly results in  
$$
p^{S_2}_{x1}(n+2) = 2p^{S_2}_{x1}(n) -p^{S_2}_{x1}(n-2)+2p^{S_2}_{x1}(n-4)
$$
and shifting the index results in  the form stated in the theorem

Furthermore, Equation \eqref{eq:peel-2} can be restated as $p^{S_2}_{x1}(n-4) = p^{S_2}_{x2}(n)-p^{S_2}_{x2}(n-2)$. Substituting this and its shifted version into Equation \eqref{eq:peel-1} and rearranging the terms yields the following formula
$$
p^{S_2}_{x2}(n+4) = 2p^{S_2}_{x2}(n+2) - p^{S_2}_{x2}(n)+2p^{S_2}_{x2}(n-2).
$$ 
Replacing $n$ with $n-4$ yields the second recurrence initially for even $n\geq 12$, since the recurrences in Lemma~\ref{lem:peeling} hold for
$n\geq 8$. It remains to verify the case $n=10$. For $x=1$, the relevant values are
$$
p^{S_2}_{12}(4)=0,\qquad p^{S_2}_{12}(6)=0,\qquad
p^{S_2}_{12}(8)=2,\qquad p^{S_2}_{12}(10)=4,
$$
and for $x=2$ they are
$$
p^{S_2}_{22}(4)=0,\qquad p^{S_2}_{22}(6)=1,\qquad
p^{S_2}_{22}(8)=1,\qquad p^{S_2}_{22}(10)=1.
$$
Thus the second recurrence also holds for $n=10$.
\end{proof}

The above theorem immediately results in  a formula for the number $p^{S_2}(n)$ of all self-avoiding polygons on the strip $S_2$ as well as for all specific sub-types with given start/end configurations. 

\begin{corollary}\label{PT}
For every even integer $n\geq 4$, we have
\begin{align}
		p^{S_2}_{11}(n)
		&= \frac{2^{n/2}+2\rho(n/2-2)}{5},\\
		p^{S_2}_{12}(n)=p^{S_2}_{21}(n)
		&= \frac{2^{n/2-1}+\rho(n/2)+\rho(n/2-1)}{5},\\
		p^{S_2}_{22}(n)
		&= \frac{2^{n/2-2}+\rho(n/2-3)}{5},\\
		p^{S_2}(n)
		&= \frac{9\cdot2^{n/2-2}+\rho(n/2-1)}{5},
\end{align}
where $\rho:\mathbb Z\to\mathbb Z$ denotes the $4$-periodic function whose values for $n\equiv 0,1,2,3\pmod 4$ are $3,1,-3,-1$, respectively.
\end{corollary}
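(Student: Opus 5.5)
The plan is to solve the linear recurrence of Theorem~\ref{Rec} explicitly and fix constants from initial values. Writing $n=2k$ and $a_k:=p^{S_2}_{xy}(2k)$, Theorem~\ref{Rec} gives $a_k=2a_{k-1}-a_{k-2}+2a_{k-3}$. Its characteristic polynomial is
$$t^3-2t^2+t-2=(t-2)(t^2+1),$$
with roots $2,\,i,\,-i$. Hence every solution has the form $a_k=\alpha 2^k+\beta i^k+\bar\beta(-i)^k$. Equivalently, $a_k=\alpha 2^k+\sigma(k)$ with $\sigma$ a $4$-periodic function satisfying $\sigma(k+2)=-\sigma(k)$. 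The function $\rho$ in the statement has exactly this antiperiodicity, since $\rho(k+2)=-\rho(k)$. Consequently each shift $\rho(k-j)$, and each of the combinations appearing on the right-hand sides, satisfies the recurrence. So does $2^{k-j}$.

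First I check that each claimed right-hand side solves $a_k=2a_{k-1}-a_{k-2}+2a_{k-3}$. This is immediate from linearity and the factorisation, because the space of $4$-periodic antiperiodic sequences is two-dimensional and spanned by shifts of $\rho$. Then, by uniqueness of solutions of a third-order recurrence, it suffices to compare three consecutive values with the true counts. Theorem~\ref{Rec} only holds for $n\ge10$, so the recurrence links values at $n\ge4$. Agreement at $n=4,6,8$ therefore propagates to all even $n\ge4$. To be safe I would also check $n=10$, since the recurrence for $n=10$ uses $n=4$.

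The initial values come from direct enumeration of small polygons on $S_2$:
\begin{itemize}
\item $n=4$: only unit squares, so $p_{11}(4)=2$ and the other three counts are $0$.
\item $n=6$: the $2\times1$ horizontal rectangles, two of them, are of type $11$; the $1\times2$ vertical rectangle is of type $22$.
\item $n=8$: enumerate by horizontal span and use the slice structure of Remark~\ref{slices}.
\end{itemize}
The values quoted in the proof of Theorem~\ref{Rec} ($p_{12}(8)=2$, $p_{22}(8)=1$, etc.) are consistent with this. I would then check, for example, $p_{11}$: at $n=4$ the formula gives $(4+2\rho(0))/5=(4+6)/5=2$, and at $n=6$ it gives $(8+2\rho(1))/5=2$. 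The other cases are handled the same way.

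The symmetry $p_{12}=p_{21}$ follows from horizontal reflection. This reflection is a bijection, since reflected polygons are counted separately but a reflection swaps the left and right configuration types.

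Finally, $p^{S_2}(n)=p_{11}+2p_{12}+p_{22}$, because every SAP has exactly one type at each end: by Remark~\ref{slices}, the extreme column contains one or two vertical steps. Summing the formulas gives
$$\tfrac{(4+4+1)2^{k-2}}{5}=\tfrac{9\cdot2^{k-2}}{5}$$
for the exponential part. The periodic part is $2\rho(k-2)+2\rho(k)+2\rho(k-1)+\rho(k-3)$. Using $\rho(k-2)=-\rho(k)$ and $\rho(k-3)=-\rho(k-1)$, this reduces to $\rho(k-1)$.

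The main obstacle is making the small-case enumeration at $n=8$ airtight, especially the type-$11$ count there, which the paper has not listed. I would enumerate carefully by span: spans $3$ and $2$, with the case of height $1$ or $2$ in the interior slices. Alternatively, I would derive these values from Lemma~\ref{lem:peeling} applied at $n=8$ from the $n=4,6$ data, which avoids independent enumeration.
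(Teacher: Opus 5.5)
Your proposal is correct and follows essentially the paper's argument. You check that each closed form solves the recurrence of Theorem~\ref{Rec} via $\rho(m+2)=-\rho(m)$, match the values at $n=4,6,8$ (namely $p_{11}(8)=p_{12}(8)=p_{21}(8)=2$ and $p_{22}(8)=1$, which your span-by-span enumeration or Lemma~\ref{lem:peeling} at $n=8$ confirms), and sum the four formulas using the antiperiodicity of $\rho$. The differences are cosmetic: you get $p_{12}=p_{21}$ from horizontal reflection rather than from both sequences sharing the recurrence and initial values, and your extra check at $n=10$ is unnecessary.
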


\begin{proof}
For $n=4,6,8$, the initial values of the sequences $p^{S_2}_{11}(n)$, $p^{S_2}_{12}(n)=p^{S_2}_{21}(n)$, and $p^{S_2}_{22}(n)$ are
$$
	\begin{gathered}
		p^{S_2}_{11}(4)=2,\quad
		p^{S_2}_{12}(4)=p^{S_2}_{21}(4)=p^{S_2}_{22}(4)=0,\\
		p^{S_2}_{11}(6)=2,\quad
		p^{S_2}_{12}(6)=p^{S_2}_{21}(6)=0,\quad
		p^{S_2}_{22}(6)=1,\\
		p^{S_2}_{11}(8)=p^{S_2}_{12}(8)=p^{S_2}_{21}(8)=2,\quad
		p^{S_2}_{22}(8)=1.
	\end{gathered}
$$
	
The first three formulas stated in the corollary attain precisely these initial values. Moreover, since
$$
\rho(m+2)=-\rho(m)
$$
for every $m\in\mathbb Z$, direct substitution shows that all three formulas satisfy the recursive relations of Theorem~\ref{Rec}. Since these recurrences, together with the values at $n=4,6,8$, uniquely determine the respective sequences for every even $n\geq4$, the first three formulas follow.
	
Finally,
$$
p^{S_2}(n) = p^{S_2}_{11}(n)+p^{S_2}_{12}(n) +p^{S_2}_{21}(n)+p^{S_2}_{22}(n).
$$
Using the first three formulas and the identity
$$
2\rho(m-2)+2\rho(m)+\rho(m-1)+\rho(m-3)=0,
$$
we obtain
$$
p^{S_2}(n) = \frac{9\cdot2^{n/2-2}+\rho(n/2-1)}{5}.
$$
This proves the corollary.
\end{proof}

\begin{remark}
	Because we have such simple formulas for all the sequences $p^{S_2}_{xy}(n)$ with $x, y \in\{1,2\}$ as well as for $p^{S_2}(n)$ , it is of limited additional interest  to state the connective constant $\mu_{\mathrm{SAP},S_2}$ for self-avoiding polygons on the strip $S_2$. For the sake of completeness, we do nevertheless remark that it directly follows from Corollary \ref{PT} that $\mu_{\mathrm{SAP},S_2} = \lim_{\substack{n\to\infty\\ n\ \text{even}} } (p^{S_2}(n))^{1/n} = \sqrt{2}$.
\end{remark}

Interestingly, one of the formulas above yields, after reindexing, the OEIS sequence A007909 \cite{OEIS}. More precisely, if
$a(m)$ denotes the term of this sequence indexed by $m$, then
$$
p^{S_2}_{22}(2m+6)
=
a(m)
=
\frac{2^{m+1}+\rho(m)}{5}
\qquad (m\geq 0),
$$
where $\rho$ is the $4$-periodic function defined above. The OEIS entry also gives a binomial-sum representation of the same sequence. After adapting its indexing to the current offset in the OEIS, this yields the identity
$$
\sum_{i=0}^{\lfloor m/3\rfloor}
2^i\binom{m-i}{2i}
=
\frac{2^{m+1}+\rho(m)}{5}
\qquad (m\geq 0).
$$
The same unweighted binomial sum is evaluated in \cite[p.~38, Remark~4.3]{L07} in the course of solving Gessel's related weighted problem \cite{G95}. We now give an independent geometric proof of the displayed identity by deriving its left-hand side via a second enumeration of $p^{S_2}_{22}(n)$, the number of SAPs that start and end with a type-2 configuration. Analogous statements also hold true for all SAPs with arbitrary left and right boundary configurations as well as for the overall number of SAPs on the strip $S_2$. The proofs work in the very same way as the one we will provide in the following.

\begin{theorem}\label{Binom}
	For every even integer $n\geq 6$, we have
	$$
	p^{S_2}_{22}(n)
	=
	\sum_{i=0}^{\lfloor (n-6)/6\rfloor}
	2^i\binom{n/2-i-3}{2i}.
	$$
\end{theorem}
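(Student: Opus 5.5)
I would prove Theorem~\ref{Binom} by a direct encoding of type-$2$/type-$2$ SAPs as words over a three-letter alphabet, without using the recurrences of Lemma~\ref{lem:peeling}. By Remark~\ref{slices}, a SAP on $S_2$ of horizontal span $w\ge 1$ meets each of its $w$ unit-width slices in exactly two horizontal edges. Each slice therefore carries one of three row pairs: $O=\{0,2\}$ (outer), $D=\{0,1\}$ (lower notch) or $U=\{1,2\}$ (upper notch). I record the SAP as the word $s_1s_2\cdots s_w$ of these letters, read from left to right.

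\textbf{Step 1 (admissible words).} A type-$2$ configuration at the left end means the two horizontal edges leaving the leftmost column lie on rows $0$ and $2$, so $s_1=O$; likewise $s_w=O$. Next I analyse an interior column $c$ whose two neighbouring slices carry row pairs $P$ (left) and $Q$ (right). The polygon contains exactly the vertical edges on column $c$ joining the lower row of $P$ to the lower row of $Q$, and the upper row of $P$ to the upper row of $Q$.
\begin{itemize}
\item For $P=Q$ no vertical edges are needed.
\item For $O\leftrightarrow D$ there is a single vertical edge between rows $1$ and $2$; similarly, $O\leftrightarrow U$ gives a single vertical edge between rows $0$ and $1$. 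In both cases the vertex at row $1$ has degree $2$, so the polygon stays self-avoiding.
\item For $D\leftrightarrow U$ the lower path must use the edge $0$--$1$ and the upper path must use the edge $1$--$2$. Both pass through the vertex $(c,1)$, so this transition is forbidden.
\end{itemize}
Conversely, every word with $s_1=s_w=O$ and no adjacent $DU$ or $UD$ determines a unique polygon: the interior vertical edges are as above, and at each end column we add the two vertical steps joining rows $0$ and $2$. This polygon is a single simple cycle, since it is the boundary of a vertically convex region in the key-bit shape. This gives a bijection between type-$2$/type-$2$ SAPs and words of the form
\[
O^{a_0}\,N_1^{b_1}\,O^{a_1}\,N_2^{b_2}\cdots N_k^{b_k}\,O^{a_k},
\qquad a_j\ge 1,\ b_j\ge 1,\ N_j\in\{D,U\}.
\]
Each maximal notch block must be separated from the next one by at least one $O$, because a direct $D$--$U$ contact is excluded.

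\textbf{Step 2 (length).} There are $2w$ horizontal edges and $4$ vertical edges at the two end columns. Each notch block contributes exactly $2$ further vertical edges, one on entry and one on exit. Hence $n=2w+4+2k$. Writing $n=2m+6$, this becomes $w+k=m+1$, so $w=m+1-k$.

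\textbf{Step 3 (counting).} For fixed $k$ I count the words:
\begin{itemize}
\item The run lengths $a_0,b_1,a_1,\dots,b_k,a_k$ form a composition of $w$ into $2k+1$ positive parts, giving $\binom{w-1}{2k}=\binom{m-k}{2k}$ choices.
\item The block letters $N_1,\dots,N_k$ can be chosen in $2^k$ ways.
\end{itemize}
The binomial coefficient vanishes unless $3k\le m$, that is, unless $k\le\lfloor (n-6)/6\rfloor$. Summing over $k$ and substituting $m=n/2-3$ yields the stated formula $\sum_i 2^i\binom{n/2-i-3}{2i}$. The case $n=6$ corresponds to $k=0$ and $w=1$ and gives $1$, in agreement with Corollary~\ref{PT}.

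\textbf{Main obstacle.} I expect the main care to be needed in Step~1. There I must argue cleanly both that the $D$/$U$ adjacency is impossible and that every admissible word really yields a single closed self-avoiding cycle with type-$2$ ends. This is the point where the bijection is established; once it holds, Steps~2 and~3 are routine bookkeeping.
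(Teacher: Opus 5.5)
Your proof is correct and is essentially the paper's argument. Your count $\binom{w-1}{2k}$ of compositions of $w$ into $2k+1$ positive runs is exactly the paper's choice of $2i$ kink endpoints among the $l-1$ interior $x$-coordinates, and you use the same $2^k$ row choices and the same length relation $n=2w+2k+4$, while additionally making explicit the exclusion of direct $D$--$U$ transitions and the check that admissible words give simple cycles, which the paper only asserts.
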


\begin{proof}
As we have already mentioned in Remark~\ref{slices}, every SAP on the strip $S_2$ resembles a key bit. Because the SAP under consideration starts and ends with a type-2 configuration, it follows that it contains a number $i\geq 0$ of complete kinks, each of which transfers the SAP from a type-2 configuration to a type-1 configuration and back to a type-2 configuration.
	
If $i$ stands for the number of kinks, then $2i$ is the number of interior $x$-coordinates at which a kink of the SAP starts or ends. These $x$-coordinates can be chosen from the $l-1$ interior $x$-coordinates, where $l$ denotes the $x$-span of the overall SAP, i.e. the difference between the maximum and minimum $x$-coordinates of the SAP. Once the chosen coordinates are arranged increasingly,
	consecutive pairs necessarily form the start and end coordinates of
	the successive kinks. Hence, the start and end coordinates of the
	kinks can be chosen in
	$$
	\binom{l-1}{2i}
	$$
	ways.
	
	Furthermore, for each kink, the two parallel horizontal segments can
	lie either on rows $1$ and $2$ or on rows $0$ and $1$. This gives
	$2^i$ row choices for fixed start and end coordinates of the $i$ kinks.
	
	Conversely, every choice of $2i$ interior $x$-coordinates, together
	with a row choice for each of the $i$ kinks, determines exactly one
	SAP of this type.
	
	Thus, if $n$ stands for the length of the SAP under consideration,
	the number of such SAPs is
	$$
	\sum_{\substack{i\geq 0,\;\ l-1 \geq 2i\\
			2i+2l+4=n}}
	2^i\binom{l-1}{2i}.
	$$
	Indeed, over the span $l$ of the SAP, there are $2l$ horizontal steps,
	$2i$ additional vertical steps arising from the $i$ kinks, and four
	vertical steps arising from the left and right boundary configurations
	of the SAP.
	
	The equation $2i+2l+4=n$ gives
	$$
	l-1=\frac{n}{2}-i-3.
	$$
	Moreover, the necessary condition $l-1\geq 2i$ translates into
	$$
	\frac{n}{2}-i-3\geq 2i,
	$$
	or equivalently,
	$$
	i\leq\frac{n-6}{6}.
	$$
	Substituting for $l-1$ therefore yields
	$$
	p^{S_2}_{22}(n)
	=
	\sum_{i=0}^{\lfloor (n-6)/6\rfloor}
	2^i\binom{n/2-i-3}{2i},
	$$
	which proves the theorem.
\end{proof}

Furthermore, let us remark that the OEIS entry states that $a(m)$ also counts compositions of $m$ into parts of size $1$ of one kind and parts of size at least $3$ of two kinds, with no parts of size $2$ \cite[Sequence~A007909]{OEIS}. This interpretation becomes even clearer when we show that every SAP counted by $p^{S_2}_{22}(2m+6)$ can be encoded by a binary sequence in the following way.

The binary word associated with such a SAP is constructed as follows. Some steps at the left boundary and in the leftmost unit-width slice are already fixed and do not have to be encoded. In detail, since we consider a SAP with type-2 boundary configurations at both ends, it has two vertical steps at its leftmost $x$-coordinate, followed by two horizontal steps in the extreme rows $0$ and $2$.

From this point on, while traversing the SAP slice by slice from left to right, the bits are interpreted as follows. A $\mathtt{0}$ means that the horizontal steps from the preceding slice are simply extended in the same rows. A $\mathtt{1}$, on the other hand, has two possible meanings. If the two horizontal steps in the preceding slice lie in adjacent rows, then the steps in the next slice lie in the two extreme rows, so that the current kink ends. If the two horizontal steps in the preceding slice lie in the extreme rows, then the $\mathtt{1}$ marks the beginning of a kink, and the following bit $b$ decides whether the succeeding two horizontal steps lie in rows $1$ and $2$ or in rows $0$ and $1$: $b=1$ indicates the former and $b=0$ the latter case. This choice bit is an additional bit and does not correspond to another unit-width slice.

Hence, in the case of SAPs starting and ending with a type-2 configuration, the binary sequence has a unique decomposition into blocks of the form
\begin{itemize}
	\item $\mathtt{0}$;
	\item $\mathtt{1}b\mathtt{0}^{\,r}\mathtt{1}$, where
	$b\in\{0,1\}$ and $r\geq0$.
\end{itemize}
The words arising in this way are precisely the concatenations of these blocks, we refer to them as admissible binary words.This description makes it clear that every admissible binary word encodes exactly one self-avoiding polygon of the type under consideration, up to horizontal translation, and that every such self-avoiding polygon is encoded by exactly one admissible binary word. See Figure~\ref{fig:sap-24-span2-marked} for a self-avoiding polygon with type-2 boundary configurations at both ends and its corresponding binary encoding.

\begin{figure}[htbp]
	\centering
	\begin{tikzpicture}[x=0.7cm,y=0.7cm,line cap=round]
		\tikzset{
			gridline/.style={
				line width=0.8pt,
				draw=black!70
			},
			walk/.style={
				line width=2.4pt,
				draw=red!85
			},
			stepmark/.style={
				circle,
				draw=red!85,
				fill=white,
				line width=0.8pt,
				inner sep=0pt,
				minimum size=4.2pt
			}
		}
		
		\draw[gridline] (-0.5,0) -- (8.5,0);
		\draw[gridline] (-0.5,1) -- (8.5,1);
		\draw[gridline] (-0.5,2) -- (8.5,2);
		
		\draw[walk]
		(0,0) -- (1,0) -- (2,0) -- (3,0) -- (3,1)
		-- (4,1) -- (4,0) -- (5,0) -- (6,0) -- (7,0)
		-- (8,0) -- (8,1) -- (8,2) -- (7,2) -- (7,1)
		-- (6,1) -- (5,1) -- (5,2) -- (4,2) -- (3,2)
		-- (2,2) -- (1,2) -- (0,2) -- (0,1) -- cycle;
		
		\foreach \x/\y in {
			0/0,1/0,2/0,3/0,3/1,4/1,4/0,5/0,
			6/0,7/0,8/0,8/1,8/2,7/2,7/1,6/1,
			5/1,5/2,4/2,3/2,2/2,1/2,0/2,0/1
		}
		\node[stepmark] at (\x,\y) {};
	\end{tikzpicture}
	
	\caption{A self-avoiding polygon of length $24$ with type-2 boundary
		configurations at both extreme $x$-coordinates. Its binary code is
		$\mathtt{001111001}$, with the unique block decomposition
		$\mathtt{0|0|111|1001}$. The blocks $\mathtt{111}$ and $\mathtt{1001}$
		encode the upper and lower kink, respectively.}
	\label{fig:sap-24-span2-marked}
\end{figure}

There is therefore a one-to-one correspondence between admissible binary words of length $m$ and self-avoiding polygons of length $2m+6$. Indeed, we start with an initial seed consisting of two vertical steps and two horizontal steps, while the two rightmost vertical steps are also not included in the binary encoding. Each bit associated with an interior $x$-coordinate accounts for two further horizontal steps, whereas each additional choice bit $b$ accounts for the two vertical steps of the corresponding kink. Thus, the four steps of the initial seed together with the two rightmost vertical steps contribute six steps, while the $m$ encoded bits contribute $2m$ further steps. For $m=0$, the empty binary word is admitted and encodes the unique SAP of length $6$.

Finally, the block $\mathtt{0}$ has length $1$ and is of one kind, whereas a block $\mathtt{1}b\mathtt{0}^{\,r}\mathtt{1}$ has length $r+3\geq3$ and is of two kinds, according to the value of $b$. Therefore, the number of compositions of $m$ into parts of size $1$ of one kind and parts of size at least $3$ of two kinds, with no parts of size $2$, equals $p^{S_2}_{22}(2m+6)$, the number of self-avoiding polygons of length $2m+6$ that start and end with a type-2 configuration.

\end{document}